\documentclass{article}
\usepackage{aarons_packages}
\title{Asymptotic analysis of harmonic functions in singular domains with inhomogenous Robin boundary conditions.}
\author{Aaron Pim, Kirill Cherednichenko and Jey Sivaloganathan}
\date{September 2020}
\begin{document}

\maketitle
\begin{abstract}
In 1991, Vladimir Maz'ya, Serguei Nazarov and Boris Plamenevskij developed the theory of compound asymptotics for elliptic boundary value problems in singularly perturbed domains. They considered a harmonic function whose domain contains a small inclusion. We applied this technique in the analysis of a Nematic liquid crystal with a small colloidal inclusion. However, we realised that the Maz'ya, Nazarov and Plamenevskij did not consider the asymptotic analysis of Robin boundary conditions, which corresponded to weak anchoring in the context of liquid crystals. 

In this piece we shall derive an asymptotic approximation to a harmonic function, in a domain with a small circular inclusion of radius $\epsilon>0$, with inhomogenous Robin boundary conditions and a corresponding parameter $\kappa>0$. We shall then prove that the difference between the exact solution and the approximation is uniformly bounded and derive the rate as a function of $\epsilon$ and $\kappa$.
\end{abstract}
\section{Introduction}
The liquid crystalline phase is an intermediate state of matter between isotropic liquid and a solid crystal. A material is said to be in the liquid crystalline phase if it displays certain properties typically associated with both the isotropic liquid phase and the solid crystalline phase \cite{Virga}. We shall be considering the Nematic liquid crystal phase, where the average orientation of the molecules can be modelled as a unit vector field \cite{IntroOptics, DeGennes}.

A colloidal suspension is a type of soft matter which tiny particulates, the colloidal inclusions, are suspended in a distinct host media; some every day examples of colloidal suspensions are smoke, fog, paint and milk \cite{Topological_colloids}. These colloids are typically much larger, of order of magnitude $\mu m$ \cite{Colloidal_Inculsions}, than the surrounding liquid crystal but much smaller than the domain. 

In the field of optics, liquid crystal displays with solid colloidal inclusions (also known as composite liquid crystal displays) have been of particular interest in recent times as they can change their scattering properties via external fields \cite{Homogenisation_GL_w_Colloids}. A good example would be an assemblage of silica spheres whose configurations are easily erased, an important property in liquid crystal displays \cite{Erasable_Optical_Storage_Bistable_LC_Cells}.

In this piece we wish to understand how the presence of a single colloidal inclusion $D_{\epsilon}\subset \mathbb{R}^2$ affects the configuration of the nematic liquid crystal in a shallow well $\Omega\subset \mathbb{R}^2$. We shall use asymptotic analysis to construct an approximation to the solution, with the small asymptotic parameter being the radius of our colloidal inclusion, which shall be denoted $\epsilon \ll 1$.

In broad terms, asymptotic analysis is the construction of a series approximate solutions to problem with a parameter which is either very large or very small \cite{Asymptotic_Analysis}. If the boundaries of the domain are dependent on this asymptotic parameter and the limit boundary is not smooth; this is known as a singular perturbed boundary \cite{Nazarov} and requires careful analysis.

The specific method we shall derive is known as \textit{compound asymptotics}, where we approximate the solution $u^\epsilon:\Omega \setminus D_\epsilon \rightarrow \mathbb{R}$ by an \textit{interior} function $v_0: \Omega \rightarrow \mathbb{R}$, which satisfies the same underlying equation and the boundary conditions on $\partial \Omega$, similar analyses sometimes refer to this function as the \textit{outer} function. As $v_0$ does not satisfy the boundary conditions on $\partial D_\epsilon$ we consider an \textit{exterior} function $w_0:\mathbb{R}^2 \setminus D_\epsilon$, which satisfies the same boundary conditions on $\partial D_\epsilon$ as the function $u^\epsilon - v_0$, and is such that $w_0|_{\partial \Omega}\approx 0$, thus we approximate the function $u^\epsilon$ by the sum $v_0+w_0$. This method is referred to as compound asymptotic because it can be applied algorithmically and recursively, as the same analysis may be applied to the function $u^\epsilon - v_0 - w_0$ to obtain functions $v_1$ and $w_1$.

The underlying equations that we shall eventually be considering is the scalar Laplace equation, thus we ask ourselves "what boundary conditions does our function satisfy?". The asymptotic analysis of a harmonic function with Dirichlet boundary conditions on a singularly perturbed boundary, was analysed by Maz'ya, Nazarov and Plamenevskij  \cite{Nazarov}, in the context of liquid crystals this corresponds to strong anchoring \cite{Apala_bistable}. However, compound asymptotic analysis of Robin boundary conditions has not been conducted, such boundary conditions correspond to weak anchoring in the context of liquid crystals. We shall derive our problem from the one constant approximation to the Oseen-Frank energy density \cite{OseenFrank_limit_and_beyond,Frank} with the Rapini-Papoular anchoring energy \cite{Rapini-Papoular}.
\section{Derivation of the problem}
\subsection{Domain definition}
We define the well in which the liquid crystal and the colloidal inclusions occupy to be $\Omega \subset \mathbb{R}^2$ open and bounded with Lipschitz continuous boundary $\partial \Omega$. Let the centre of our colloidal inclusion be given by $\mathbf{c}\in \Omega$, and we define the constants $0< R_{min}< R_{max}$ by 
\begin{equation}\label{Rmin and Rmax definition}
    \min\limits_{\mathbf{x}\in \partial \Omega}|\mathbf{x}-\mathbf{c}| =: R_{min}\leqslant |\mathbf{x}-\mathbf{c}| \leqslant R_{max}:= \max\limits_{\mathbf{x}\in \partial \Omega}|\mathbf{x}-\mathbf{c}|, \quad \forall \mathbf{x}\in\partial \Omega.
\end{equation}
Finally we assume that the radius of our inclusion, the asymptotic parameter, satisfies
\begin{equation}\label{epsilon bound}
        \epsilon \leqslant \frac{R_{min}}{2}, \quad \Omega_\epsilon := \Omega \setminus D_\epsilon, \quad D_\epsilon := \overline{B}_{\epsilon}(\mathbf{c}).
\end{equation}
We denote the outward pointing normal by $\mathbf{n}:\partial \Omega_\epsilon\rightarrow \mathbb{S}^1$.
\subsection{Energy functional and first variation}
Consider the one-constant approximation Oseen-Frank energy density with Rapini-Papoular surface anchoring energy, which is denoted $E_{\kappa}:H^1(\Omega, \mathbb{S}^1)\rightarrow [0,\infty)$ and given by
\begin{equation}\nonumber
	E_{\kappa}[\mathbf{u}] = \int\limits_{\Omega_{\epsilon}}\left(|\nabla u_1|^2 +|\nabla u_2|^2\right) \ d\mathbf{x} + \frac{1}{2\kappa}\int\limits_{\partial \Omega_\epsilon} (1-(\mathbf{u}\cdot \mathbf{p}^{\epsilon})^2) \ dS_{\mathbf{x}},
\end{equation}
where the unit vector $\mathbf{u}=(u_1,u_2)$ represents the average direction of molecular orientation, the scalar $\kappa >0$ represents the extrapolation length \cite{Extrapolation_length} and the unit boundary vector field $\mathbf{p}^{\epsilon}\in L^\infty(\partial \Omega_{\epsilon}, \mathbb{S}^1)$ represents the preferred direction of orientation along the boundary. The extrapolation length $\kappa$ shall become one of the main variables in the analysis as we shall investigate how well the asymptotic approximation matches the original solution as $\epsilon$ and $\kappa$ vary. We have by the Babuška–Lax–Milgram theorem \cite{Evans} that the functional $E_{\kappa}$ has a unique minimiser, thus we shall consider the first variation
\begin{equation}\nonumber
        	\begin{split}
        		0 =\delta E_{\kappa} = \int\limits_{\Omega_{\epsilon}}\left (\nabla \underline{\phi} \cdot \nabla \mathbf{u}- \underline{\phi}\cdot \mathbf{u}|\nabla\mathbf{u}|^2\right)\ d\mathbf{x} - \frac{1}{\kappa}\int\limits_{\partial \Omega_{\epsilon}}(\mathbf{u}\cdot \mathbf{p}^\epsilon)((T\mathbf{u})\cdot \mathbf{p}^\epsilon)((T\mathbf{u})\cdot \underline{\phi})\ dS_{\mathbf{x}}.
        	\end{split}
\end{equation}
for all $\underline{\phi} \in C^{\infty}(\bar{\Omega},\mathbb{R}^2) $ where $T = \left(\begin{array}{cc}0 & -1 \\ 1 & 0 \end{array} \right)$ is the $\frac{\pi}{2}$ rotation matrix. 
\subsection{Angular formulation and assumptions}
It is at this point we notice an issue, the analysis we wish to conduct is for a scalar function $u^\epsilon$, whereas the above weak partial differential equation is in terms of a unit vector field $\mathbf{u}$, thus we wish to pass to the angular formulation of the problem. To ensure that the angular formuation $u^\epsilon$ is of sufficient regularity we must assume that $\int\limits_{C} u_1 d u_2 - u_2 d u_1 = 0$ for all closed continuous curves $C \subset \Omega_{\epsilon}$. Thus we may define the gradient of $u^\epsilon$ in terms of $\mathbf{u}$,
$$\nabla u^\epsilon := u_1\nabla u_2 - u_2 \nabla u_1, \text{ on } \overline{\Omega}_\epsilon. $$
As we have passed to the angular formulation of the problem, we must do the same for the boundary data. Let the boundary vector $\mathbf{p}^{\epsilon}$  be such that there exist scalar functions $f_{\Omega}, g_{\Omega} \in L^\infty(\partial \Omega)$, $f_{D} \in L^\infty([-\pi,\pi))$ and $g_D\in L^\infty(\partial D_{\epsilon})$ such that
\begin{equation}\nonumber
	\mathbf{p}^\epsilon:=\left(\begin{array}{c}
	\cos (f+\epsilon g) \\ \sin (f+\epsilon g)
	\end{array} \right) \quad f(\mathbf{x}):=\left\{\begin{array}{ll}
	f_{\Omega}(\mathbf{x}), & \mathbf{x}\in \partial \Omega \\
    f_{D}(\vartheta(\mathbf{x}-\mathbf{c})), & \mathbf{x}\in \partial D_\epsilon
	\end{array} \right. \quad g(\mathbf{x}):=\left\{\begin{array}{ll}
	g_{\Omega}, & \text{ on }\partial \Omega \\
    g_{D}, & \text{ on } \partial D_\epsilon
	\end{array} \right.
\end{equation}
where $\vartheta: \mathbb{R}^2\setminus \{\underline{0}\}\rightarrow [-\pi,\pi)$ is given by $\vartheta(\mathbf{x}):=\text{atan2}(\omega_2,\omega_1)$, we shall use $\vartheta$ to refer to the angular co-ordinate of the vector $\mathbf{x}-\mathbf{c}$. As we are considering harmonic functions and with a circular boundary, this motivates us to consider a Fourier expansion of $f_D$, so that we may explicitly express the analytical solution to our exterior problem. We require that there exists a uniformly convergent Fourier series
\begin{equation}\label{Fourier series boundary data}
	f_D(\vartheta) := \frac{1}{2\pi}\int\limits_{-\pi}^{\pi}f_D(\vartheta) d\vartheta + \sum\limits_{n=1}^{\infty} \mathbf{f}^n\cdot  \left(\begin{array}{c} \cos n\vartheta \\ \sin n\vartheta\end{array} \right),
\end{equation}
for $\mathbf{f}^n \in \mathbb{R}^2$ for $n \in \mathbb{N}$.
\subsection{Strong problem and regularity}
When we pass to the angular formulation in our previous problem we deduce that $u^\epsilon \in H^1(\Omega_\epsilon)$ satisfies Laplace's equation, in the weak sense. However, we may obtain a higher degree of regularity by applying Weyl's lemma\cite{Weyls_Lemma} to deduce that it is in the class$ (H^1\cap C^{\infty})(\Omega_{\epsilon})$. We now turn our attention to the boundary condition. as $u^\epsilon$ satisfies $$ \frac{\partial u^\epsilon}{\partial \mathbf{n}} =  \frac{1}{2\kappa}\sin(2(f+\epsilon g - u^{\epsilon})),\text{ on }\partial \Omega_\epsilon.$$
This boundary condition, in general, will not yield an explicit solution thus we wish to approximate this boundary condition by linearising the sine term. Thus we have that $u^\epsilon \in (H^1\cap C^{\infty})(\Omega_{\epsilon})$ satisfies
$$ -\Delta u^{\epsilon} = 0,  \text{ on } \Omega_\epsilon, \quad 
    u^{\epsilon}+ \kappa \frac{\partial u^{\epsilon}}{\partial \mathbf{n}} = f+\epsilon g, \text{ on }\partial \Omega_\epsilon, $$
 we seek to asymptotically approximate this function. To prove the asymptotic bounds for our approximation, we shall apply the maximum principle for harmonic functions. Thus we must prove that the trace of $u^\epsilon$ is bounded, so that we may apply the maximum principle.
\begin{lemma}
	The trace of $u^\epsilon|_{\partial \Omega_\epsilon}$ and the normal derivative $\frac{\partial u^\epsilon}{\partial \mathbf{n}}$ are both bounded.
\end{lemma}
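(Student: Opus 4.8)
The plan is to work directly with the linearised boundary value problem $-\Delta u^\epsilon = 0$ on $\Omega_\epsilon$, $u^\epsilon + \kappa\,\partial_{\mathbf n}u^\epsilon = f + \epsilon g$ on $\partial\Omega_\epsilon$, and to exploit the sign structure of the Robin condition together with the maximum principle. First I would record that, since $f,g \in L^\infty$ by assumption, the datum $h := f + \epsilon g$ satisfies $\|h\|_{L^\infty(\partial\Omega_\epsilon)} \leqslant \|f\|_\infty + \epsilon\|g\|_\infty =: M$, with $M$ independent of $\epsilon$ on the range $\epsilon \leqslant R_{\min}/2$. The key observation is that at a point $\mathbf x_0 \in \partial\Omega_\epsilon$ where the trace $u^\epsilon$ attains its maximum over $\overline{\Omega_\epsilon}$ — which exists because $u^\epsilon \in C^\infty(\Omega_\epsilon) \cap H^1(\Omega_\epsilon)$ is harmonic, hence continuous up to the (Lipschitz, piecewise-smooth) boundary, and the maximum of a nonconstant harmonic function is attained on the boundary — the Hopf-type inequality gives $\partial_{\mathbf n}u^\epsilon(\mathbf x_0) \geqslant 0$. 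Then the Robin condition yields $u^\epsilon(\mathbf x_0) = h(\mathbf x_0) - \kappa\,\partial_{\mathbf n}u^\epsilon(\mathbf x_0) \leqslant M$. Applying the same argument to $-u^\epsilon$ at a boundary minimum gives $u^\epsilon \geqslant -M$ there, so $\|u^\epsilon\|_{L^\infty(\partial\Omega_\epsilon)} \leqslant M$, and by the maximum principle $\|u^\epsilon\|_{L^\infty(\overline{\Omega_\epsilon})} \leqslant M$ as well. From the Robin identity, $\partial_{\mathbf n}u^\epsilon = \kappa^{-1}(h - u^\epsilon)$, whence $\|\partial_{\mathbf n}u^\epsilon\|_{L^\infty(\partial\Omega_\epsilon)} \leqslant 2M/\kappa$, giving the second bound.

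The step I expect to be the main obstacle is justifying the Hopf-type sign statement $\partial_{\mathbf n}u^\epsilon(\mathbf x_0) \geqslant 0$ at the boundary maximum, because $\partial\Omega_\epsilon$ is only Lipschitz, not $C^2$, so the classical Hopf lemma (which needs an interior ball condition) is not directly available near the corners or low-regularity points of $\partial\Omega$; only $\partial D_\epsilon$ is smooth. I would handle this in two ways depending on where the extremum sits: if $\mathbf x_0 \in \partial D_\epsilon$, the interior ball condition holds and the classical Hopf lemma applies verbatim; if $\mathbf x_0 \in \partial\Omega$, I would instead avoid pointwise normal-derivative arguments and argue variationally — multiply $-\Delta u^\epsilon = 0$ by $(u^\epsilon - M)^+ \in H^1(\Omega_\epsilon)$, integrate by parts, and use the boundary integral $\int_{\partial\Omega_\epsilon}\kappa^{-1}(h - u^\epsilon)(u^\epsilon - M)^+\,dS \leqslant 0$ (since $h \leqslant M$ forces $h - u^\epsilon \leqslant 0$ on the set where $(u^\epsilon - M)^+ \neq 0$) to conclude $\int_{\Omega_\epsilon}|\nabla(u^\epsilon-M)^+|^2 \leqslant 0$, hence $(u^\epsilon - M)^+ \equiv 0$. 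This weak formulation sidesteps the regularity issue on $\partial\Omega$ entirely and is cleaner; I would probably present the whole $L^\infty$ bound this way and only invoke Hopf/the classical maximum principle as a sanity check.

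Once $\|u^\epsilon\|_{L^\infty(\overline{\Omega_\epsilon})} \leqslant M$ is established, the trace bound is immediate by continuity of $u^\epsilon$ up to $\partial\Omega_\epsilon$, and the normal-derivative bound follows algebraically from the Robin condition as above; here one should note that $\partial_{\mathbf n}u^\epsilon$ is well-defined pointwise on $\partial D_\epsilon$ (smooth piece, interior regularity of the harmonic function) and in the $H^{-1/2}$ trace sense on $\partial\Omega$, with the $L^\infty$ bound holding wherever the classical normal derivative exists — which is all we need, since the subsequent maximum-principle comparison arguments only require the trace bound and the boundary identity. I would close by remarking that both constants are explicit: the trace is bounded by $\|f\|_{L^\infty} + \epsilon\|g\|_{L^\infty}$ and the normal derivative by $\kappa^{-1}(\|f\|_{L^\infty} + \epsilon\|g\|_{L^\infty} + \|u^\epsilon\|_{L^\infty})$, so in particular the dependence on the two small parameters $\epsilon$ and $\kappa$ is transparent, which is exactly what the later asymptotic rate estimates will need.
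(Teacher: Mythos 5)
Your proposal is correct, but it takes a genuinely different route from the paper. The paper's proof is a regularity bootstrap: it starts from $u^\epsilon|_{\partial\Omega_\epsilon}\in H^{1/2}(\partial\Omega_\epsilon)\subset L^2$, feeds this through the Robin identity to get $\partial u^\epsilon/\partial\mathbf{n}\in L^2(\partial\Omega_\epsilon)$, lifts to $u^\epsilon\in H^{3/2}(\Omega_\epsilon)$ by elliptic regularity for the Neumann problem, takes the Dirichlet trace again to land in $H^1(\partial\Omega_\epsilon)$, and finally invokes Morrey's embedding on the one-dimensional boundary to get continuity and hence boundedness on the compact set $\partial\Omega_\epsilon$. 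You instead prove the $L^\infty$ bound directly from the sign structure of the Robin condition, and your fallback variational argument (testing the weak formulation with $(u^\epsilon-M)^+$, observing that both the Dirichlet energy term and the boundary term $\kappa^{-1}\int_{\partial\Omega_\epsilon}(u^\epsilon-h)(u^\epsilon-M)^+\,dS$ are nonnegative and sum to zero) is the right way to make this rigorous on a merely Lipschitz boundary --- you correctly flag that the pointwise Hopf argument needs an interior ball and so only works cleanly on $\partial D_\epsilon$. What your approach buys is an explicit, quantitative constant $\|u^\epsilon\|_{L^\infty}\leqslant\|f\|_{L^\infty}+\epsilon\|g\|_{L^\infty}$ and $\|\partial u^\epsilon/\partial\mathbf{n}\|_{L^\infty}\leqslant 2M/\kappa$, with no appeal to $H^{3/2}$ regularity theory in Lipschitz domains (which is the least elementary ingredient of the paper's argument); in fact you are essentially proving, en route, the Robin maximum principle that the paper establishes separately in its next subsection, so the two results could be merged. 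What the paper's route buys is slightly more: genuine continuity of the trace, not just essential boundedness, which it quietly uses later when taking pointwise suprema of boundary data. Neither gap is fatal; your version is self-contained and arguably cleaner for the stated claim.
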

\begin{proof}
	 Using Sobolev trace  theorems \cite{Orane} we know that $u^\epsilon|_{\partial \Omega_\epsilon} \in H^{\frac{1}{2}}(\partial \Omega_\epsilon) $ \cite{Hitchhikers_Guide_to_Sobolev}, which implies that $u^\epsilon|_{\partial \Omega_\epsilon} \in L^{2}(\partial \Omega_\epsilon)$. As the functions $f,g$ were assumed to be of class $L^\infty$ this implies that $f, g \in L^2(\partial \Omega_\epsilon)$, consequently the boundary conditions of $u^\epsilon$ imply that $\frac{\partial u^\epsilon}{\partial \mathbf{n}}\in L^2(\partial \Omega_\epsilon)$.
     
     We now use the trace theorems to deduce that $u^\epsilon \in H^{\frac{3}{2}}(\Omega_\epsilon)$; reapplying the Dirichlet trace theorem we deduce that $u^\epsilon|_{\partial \Omega_\epsilon} \in H^{1}(\partial \Omega_\epsilon)$. 
     
     Finally, we apply Morrey's inequality \cite{Jeys_Sobolov_Spaces_Course} for a 1 dimensional space to deduce that $u^\epsilon|_{\partial \Omega_\epsilon} \in C(\partial \Omega_\epsilon)$. As $\partial \Omega_{\epsilon}$ is closed and bounded, this implies boundedness, $u^\epsilon|_{\partial \Omega_\epsilon} \in L^\infty(\partial \Omega_\epsilon)$; applying the boundary condition we deduce that $\frac{\partial u^\epsilon}{\partial \mathbf{n}} \in L^\infty(\partial \Omega_\epsilon)$.
\end{proof}
As previously mentioned, we wish to apply the maximum principle to prove the bound of our problem; however, the maximum principle \cite{Evans} is in terms of the Dirichlet trace, whereas our boundary conditions are Robin. Thus we require a maximum principle in terms of the Robin trace.
\subsection{Maximum principle for Robin boundary conditions\label{Bounded Robin Maximum Principle}}
                Whilst we shall be applying this lemma to the function $u^\epsilon$, the maximum principle we are about to derive does not rely on the specific geometry $\Omega_\epsilon$. Thus we consider a more general domain $A\subset \mathbb{R}^2$  which is bounded with sufficiently smooth boundary $\partial A$ and outward pointing normal $\mathbf{n}:\partial A\rightarrow \mathbb{S}^1$. Let $a \in C^\infty(A)\cap C(\overline{A})$ be harmonic and satisfy 
                $$ \kappa \frac{\partial a}{\partial \mathbf{n}} + a = b, \text{ on } \partial A,$$
                for a given $b\in L^\infty(\partial A)$. 
                \begin{lemma}
                	The function $a$ satisfies 
                    \begin{equation}\label{Dirichlet maximum principle}
                    \inf_{\mathbf{y}\in \partial A} b(\mathbf{y}) \leqslant a(\mathbf{x}) \leqslant \sup_{\mathbf{y}\in \partial A} b(\mathbf{y}), \quad \forall \mathbf{x}\in \overline{A}.
                \end{equation}
                \end{lemma}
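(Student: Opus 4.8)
The plan is to combine the classical interior maximum principle for harmonic functions with Hopf's boundary-point lemma, the positivity of $\kappa$ being precisely what makes the Robin term cooperate. First I would reduce matters to the upper bound in \eqref{Dirichlet maximum principle}: the function $-a$ is harmonic, lies in $C^\infty(A)\cap C(\overline A)$, and satisfies $\kappa\,\partial_{\mathbf n}(-a)+(-a)=-b$ on $\partial A$ with $\sup_{\partial A}(-b)=-\inf_{\partial A}b$, so the lower bound is just the upper bound applied to $-a$. Set $M:=\sup_{\mathbf y\in\partial A}b(\mathbf y)$, which is finite since $b\in L^\infty(\partial A)$; the goal is then $a\le M$ on $\overline A$.

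Since $\overline A$ is compact and $a\in C(\overline A)$, $a$ attains its maximum over $\overline A$ at some point $\mathbf x_0$. By the strong maximum principle there is a dichotomy: either $a$ is constant on $A$, or the maximum is attained only on $\partial A$ and $a(\mathbf x)<a(\mathbf x_0)$ for every $\mathbf x\in A$. In the constant case $a\equiv c$ we have $\partial_{\mathbf n}a\equiv0$, hence $b=a=c$ on $\partial A$ and $c\le M$, which is the claim. In the non-constant case $\mathbf x_0\in\partial A$, and since the standing hypothesis that $\partial A$ is sufficiently smooth supplies an interior sphere condition at $\mathbf x_0$, Hopf's lemma applies and yields that the classical one-sided outward normal derivative at $\mathbf x_0$ exists and is strictly positive, $\partial_{\mathbf n}a(\mathbf x_0)>0$. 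Substituting into the Robin condition at $\mathbf x_0$ and using $\kappa>0$,
\[
M\;\ge\;b(\mathbf x_0)\;=\;a(\mathbf x_0)+\kappa\,\partial_{\mathbf n}a(\mathbf x_0)\;>\;a(\mathbf x_0)\;=\;\max_{\overline A}a ,
\]
so that $a(\mathbf x)\le\max_{\overline A}a\le M$ for all $\mathbf x\in\overline A$, as required.

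The step I expect to carry the real weight is the application of Hopf's lemma together with the tacit identification of the normal derivative there: one must know that at the extremal boundary point $\mathbf x_0$ the Robin condition can be read pointwise — so that $b(\mathbf x_0)$, $a(\mathbf x_0)$ and $\partial_{\mathbf n}a(\mathbf x_0)$ may be compared directly — rather than merely in the a.e.\ or trace sense, and that $\partial A$ is regular enough at $\mathbf x_0$ for the interior sphere condition to hold. For the present lemma both points are guaranteed by the smoothness assumed of $\partial A$; in the intended application to $a=u^\epsilon$ one additionally invokes the regularity already established ($u^\epsilon|_{\partial\Omega_\epsilon}$ continuous and $\partial_{\mathbf n}u^\epsilon\in L^\infty(\partial\Omega_\epsilon)$) to legitimise this comparison, and one notes that replacing $\sup$ and $\inf$ by their essential counterparts only strengthens the hypotheses. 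The remaining ingredients — the strong maximum principle, the attainment of the maximum on a compact set, and the elementary inequality above — are entirely standard.
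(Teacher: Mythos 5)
Your argument is correct, but it is not the route the paper takes. The paper argues globally: it first applies the Dirichlet maximum principle to get $\inf_{\partial A}a\leqslant a\leqslant\sup_{\partial A}a$ on $\overline A$, then uses the divergence theorem, $\int_{\partial A}\frac{\partial a}{\partial\mathbf n}\,dS_{\mathbf x}=0$, to conclude $\inf_{\partial A}\bigl(\kappa\frac{\partial a}{\partial\mathbf n}\bigr)\leqslant 0\leqslant\sup_{\partial A}\bigl(\kappa\frac{\partial a}{\partial\mathbf n}\bigr)$, and finally sums the two chains of inequalities to pass to $b=a+\kappa\frac{\partial a}{\partial\mathbf n}$. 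You instead localise at the extremal boundary point and invoke the strong maximum principle plus Hopf's boundary-point lemma. The trade-off: the paper's version avoids Hopf's lemma and any interior-sphere condition, needing only enough regularity for the divergence theorem; yours requires the interior ball condition and a pointwise reading of the Robin condition at $\mathbf x_0$ (both of which you flag, and both supplied by the assumed smoothness of $\partial A$). In exchange your argument closes cleanly where the paper's summation step is delicate: adding $a(\mathbf x)\leqslant\sup_{\partial A}a$ to $0\leqslant\sup_{\partial A}\bigl(\kappa\frac{\partial a}{\partial\mathbf n}\bigr)$ gives a bound by $\sup_{\partial A}a+\sup_{\partial A}\bigl(\kappa\frac{\partial a}{\partial\mathbf n}\bigr)$, which dominates $\sup_{\partial A}b$ rather than being dominated by it, whereas your comparison evaluates $a$, $\kappa\frac{\partial a}{\partial\mathbf n}$ and $b$ at the same point $\mathbf x_0$ and so genuinely yields $\max_{\overline A}a\leqslant\sup_{\partial A}b$. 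The only cosmetic gap on your side is the tacit connectedness of $A$ in the strong-maximum-principle dichotomy; if $A$ has several components, apply the argument on the component containing the maximiser.
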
               
            \begin{proof}
                Applying the maximum principle for harmonic functions \cite{Evans} and the continuity of $a$ on the boundary we deduce that
                \begin{equation}\nonumber
                    \inf\limits_{\mathbf{y}\in \partial A} a(\mathbf{y}) \leqslant a(\mathbf{x}) \leqslant \sup\limits_{\mathbf{y}\in \partial A} a(\mathbf{y}), \quad \forall \mathbf{x}\in \overline{A}.
                \end{equation}
                We shall now consider the bounds on the normal derivatives. As the domain $A$ is bounded, we may apply the divergence theorem
                \begin{equation}\nonumber
                    0 = \int\limits_{A}\Delta a\ d\mathbf{x}= \int\limits_{\partial A} \frac{\partial a}{\partial \mathbf{n}}\ dS_{\mathbf{x}}.
                \end{equation}
               Additionally, we have that following inequalities,
                \begin{equation}\nonumber
                    |\partial A| \inf\limits_{\mathbf{y}\in \partial A}\left(\frac{\partial a}{\partial \mathbf{n}}\right) \leqslant \int_{\partial A} \frac{\partial a}{\partial \mathbf{n}}\ dS_{\mathbf{x}} \leqslant |\partial A| \sup\limits_{\mathbf{y}\in \partial A}\left(\frac{\partial a}{\partial \mathbf{n}}\right).
                \end{equation}
               From our earlier analysis we know that $\frac{\partial u}{\partial \mathbf{n}} \in L^\infty(\partial A)$, thus the supremum and infimum both exist. Applying the above result and the assumption that $\kappa >0$ we have that
                \begin{equation}\nonumber
                    \inf\limits_{\mathbf{y}\in \partial A}\left(\kappa\frac{\partial u}{\partial \mathbf{n}}\right) \leqslant 0 \leqslant \sup\limits_{\mathbf{y}\in \partial A}\left(\kappa\frac{\partial u}{\partial \mathbf{n}}\right).
                \end{equation}
                Summing the above inequalities with the ones from equation (\ref{Dirichlet maximum principle}) we obtain our result.
            \end{proof}
            We shall use this result later to prove that the difference between our exact solution $u^\epsilon$ and the approximation $v_0+w_0$ is uniformly bounded by a constant depending on $\epsilon$ and $\kappa$.
\section{Derivation of main result}
In this section, demonstrate the construction of our asymptotic method, in a similar manner to Maz'ya et al. \cite{Nazarov}. We shall initially begin with a demonstrating an incorrect method, choosing what many would assume to be the natural choice for the interior solution. However, such a choice will have consequences when we consider the corresponding exterior problem, as we shall discover that our problem is over-determined and we require an additional degree of freedom. This shall be a consequence of the fact that we are in two dimensions.

The incorrect derivation is important because it motivates key aspects of the asymptotic method: the correct problem for the interior solution to solve and a theorem regarding the decay of exterior harmonic functions with Robin boundary conditions, in two dimensions. We shall denote the incorrect solution by $V_0$ as to differ it from the correct solution $v_0$, although we shall utilise $V_0$ in the definition of $v_0$.
\subsection{An intentionally incorrect derivation}
Recall that the method of compound asymptotics requires us to consider an interior function $V_0:\Omega\rightarrow \mathbb{R}$ which satisfies the leading order term of the boundary conditions on $\partial \Omega$ and the same underlying equation in $\Omega_\epsilon$. A natural choice would be to consider a $V_0 \in (H^1\cap C^{\infty})(\Omega)$ which satisfies the following boundary value problem
\begin{equation}\label{Interior boundary condition}
	-\Delta V_0 = 0, \text{ on }\Omega, \quad V_0 +\kappa \frac{\partial V_0}{\partial \mathbf{n}} = f_{\Omega}, \text{ on }\partial\Omega.
\end{equation}
To motivate the boundary conditions for the exterior solution we must consider the discrepancy between the exact solution $u^\epsilon$ and the interior solution $V_0$. The function $u^\epsilon-V_0$ is harmonic in $\Omega_\epsilon$ and satisfies the following boundary conditions
\begin{equation}\nonumber
	\left(I+ \kappa \frac{\partial }{\partial \mathbf{n}}\right)(u^\epsilon-V_0) = \left\{\begin{array}{ll}
	\epsilon g_\Omega, & \text{ on }\partial \Omega \\
    f_D(\vartheta(\cdot - \mathbf{c})) - V_0 + \kappa \frac{\partial V_0}{\partial r} + \epsilon g_D, & \text{ on }\partial  D_\epsilon\\
	\end{array} \right.
\end{equation}
where $r:=|\mathbf{x}-\mathbf{c}|$. The exterior solution $W_0$ must be harmonic, satisfy the same Robin boundary conditions on $\partial D_\epsilon$ (up to the leading order) and be such that the Robin trace on $\partial \Omega$ is of order $\epsilon$. This final condition is equivalent to the following decay condition
\begin{equation}\nonumber
	W_0 + \kappa \frac{\partial W_0}{\partial r}\rightarrow 0 \quad \text{ as }r\rightarrow \infty,
\end{equation}
however, as we have assumed that $w_0$ is bounded this implies that $\frac{\partial W_0}{\partial r}\rightarrow 0$ as $r\rightarrow \infty$, hence the above condition is reduced to 
\begin{equation}\label{exterior decay condition}
	W_0 \rightarrow 0 \quad \text{ as }r\rightarrow \infty.
\end{equation}
As harmonic functions are real analytic, this implies that we may consider the following Taylor expansion for the function $V_0$
$$ V_0(\mathbf{x}) = V_0(\mathbf{c}) + \nabla V_0(\mathbf{c})\cdot (\mathbf{x}-\mathbf{c}) + \frac{1}{2}\frac{\partial \nabla V_0}{\partial \omega_1}(\mathbf{c})\cdot \left(\begin{array}{c}
(x_1-c_1)^2 - (x_2-c_2)^2 \\ 2(x_1-c_1)(x_2-c_2)
\end{array} \right) + \mathcal{O}(|\mathbf{x}-\mathbf{c}|^3).$$
This implies that when we restrict the values of $x$ to the boundary $\partial D_\epsilon$, which corresponds to $|\mathbf{x}-\mathbf{c}|=\epsilon$, we obtain
$$\left.V_0 - \kappa \frac{\partial V_0}{\partial r}\right|_{\partial D_\epsilon} = V_0(\mathbf{c}) - \kappa \nabla V_0(\mathbf{c})\cdot \left(\begin{array}{c}
\cos \vartheta \\ \sin \vartheta
\end{array} \right) +\mathcal{O}(\epsilon(1+\kappa)), \quad \vartheta \in [-\pi,\pi). $$
Truncating this series at the leading order yields the boundary condition of the exterior problem,
\begin{equation}\label{inccorect exterior boundary condition}
	\left(W_0 - \kappa \frac{\partial W_0}{\partial r}\right)(1,\vartheta) = f_D(\vartheta) - V_0(\mathbf{c}) + \kappa \nabla V_0(\mathbf{c})\cdot \left(\begin{array}{c}
\cos \vartheta \\ \sin \vartheta
\end{array} \right)
\end{equation}
However, we immediately encounter an issue, in that a bounded harmonic function which satisfies equation (\ref{inccorect exterior boundary condition}) only, is already uniquely determined. This is a major issue because such a function might not satisfy the decay condition from equation (\ref{exterior decay condition}). Which motivates the question "What is the limiting value of a bounded, exterior, harmonic function in terms of its Robin boundary conditions?"
\subsection{The limit of a bounded, exterior harmonic function\label{limit theorem}}
            Similar to the maximum principle, the result that we shall derive can be applied to a non-circular inclusion, thus we shall denote this inclusion by $\mathcal{D}$ to differentiate it from $D_\epsilon$. 
            \begin{lemma}
                        Let $\mathcal{D}\subset \mathbb{R}^2$ be a bounded, star-shaped, closed domain with sufficiently smooth boundary and inward pointing normal $\mathbf{n}:\partial \mathcal{D}\rightarrow \mathbb{S}^1$, additionally we assume that $\underline{0}\in \mathcal{D}\setminus \partial \mathcal{D}$. Let $a \in C^\infty(\mathbb{R}^2 \setminus \mathcal{D})$ be a bounded harmonic function satisfying $$a + \kappa \frac{\partial a}{\partial \mathbf{n}} = b, \text{ on } \partial \mathcal{D},$$
            for a given $b\in L^\infty(\partial \mathcal{D})$. The function $a$ satisfies
            \begin{equation}\nonumber
                \lim_{|\mathbf{x}|\rightarrow \infty}a(\mathbf{x})\rightarrow  \int_{\partial \mathcal{D}} b H^\kappa \ dS_{\mathbf{x}},
            \end{equation}
            where the bounded harmonic function $H^\kappa \in C^2(\mathbb{R}^2\setminus \mathcal{D})$ satisfies 
            \begin{equation}\nonumber
                 H^\kappa+\kappa\frac{\partial H^\kappa}{\partial \mathbf{n} } = \frac{\partial H}{\partial \mathbf{n} } \text{ on }\partial \mathcal{D},
            \end{equation}
            where the function $H$ is defined by $$H(\mathbf{x}) :=-\frac{1}{2\pi}\log |\mathbf{x}|+ H_0(\mathbf{x}) \text{ on }\mathbb{R}^2\setminus \mathcal{D}\cup \partial \mathcal{D},$$
            where the function $H_0 \in C^2(\Omega \setminus \mathcal{D})$ is bounded, harmonic and satisfies the Dirichlet boundary condition  $H_0(\mathbf{x})=\frac{1}{2\pi}\log |\mathbf{x}| \text{ on }\partial \mathcal{D}.$
        \end{lemma}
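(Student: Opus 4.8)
The plan is to read off the limit from two applications of Green's second identity on the annulus $A_\rho := B_\rho(\underline{0})\setminus\mathcal{D}$ (for $\rho$ large enough that $\mathcal{D}\subset B_\rho(\underline{0})$), followed by letting $\rho\to\infty$. The analytic engine is the behaviour at infinity of bounded harmonic functions on a two-dimensional exterior domain: since in two dimensions the Kelvin transform $\mathbf{x}\mapsto\mathbf{x}/|\mathbf{x}|^2$ preserves harmonicity, and bounded harmonic functions have removable point singularities, any bounded harmonic $\varphi\in C^\infty(\mathbb{R}^2\setminus\mathcal{D})$ satisfies $\varphi(\mathbf{x}) = \varphi_\infty + \mathcal{O}(|\mathbf{x}|^{-1})$ and $\nabla\varphi(\mathbf{x}) = \mathcal{O}(|\mathbf{x}|^{-2})$ as $|\mathbf{x}|\to\infty$; moreover the flux $\int_{\partial B_\rho}\partial_r\varphi\,dS_{\mathbf{x}}$ is independent of $\rho$ by harmonicity and therefore equals its limit $0$. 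I would apply this to $a$, to $H^\kappa$ and to $H_0$. The hypotheses that $\mathcal{D}$ is star-shaped and $\underline{0}\in\mathcal{D}\setminus\partial\mathcal{D}$ ensure that $\mathbb{R}^2\setminus\mathcal{D}$ is a connected exterior domain on which $\log|\cdot|$ is smooth, which is what makes the displayed definitions of $H$ and $H^\kappa$ sensible; the regularity of the boundary integrals below follows, by the same Sobolev-trace argument used above for $u^\epsilon$, from the facts that $a|_{\partial\mathcal{D}}$ and $\partial_{\mathbf{n}}a|_{\partial\mathcal{D}}$ lie in $L^\infty(\partial\mathcal{D})$ while $H$ and $H^\kappa$ are smooth up to $\partial\mathcal{D}$.

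The first Green identity is applied to the harmonic pair $(a, H^\kappa)$ on $A_\rho$, with the outward normal of $A_\rho$ being $\mathbf{n}$ on $\partial\mathcal{D}$ (which points into $\mathcal{D}$, hence out of $A_\rho$) and the radial direction on $\partial B_\rho$. Because $a$ and $H^\kappa$ are bounded with gradients of order $|\mathbf{x}|^{-2}$, the integrand $a\,\partial_r H^\kappa - H^\kappa\,\partial_r a$ is of order $|\mathbf{x}|^{-2}$ on $\partial B_\rho$, so that contribution is $\mathcal{O}(\rho^{-1})$ and vanishes in the limit, leaving
\begin{equation}\nonumber
\int_{\partial\mathcal{D}} a\,\frac{\partial H^\kappa}{\partial\mathbf{n}}\,dS_{\mathbf{x}} = \int_{\partial\mathcal{D}} H^\kappa\,\frac{\partial a}{\partial\mathbf{n}}\,dS_{\mathbf{x}}.
\end{equation}
The second Green identity is applied to $(a, H)$ on $A_\rho$. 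On $\partial\mathcal{D}$ one has $H=0$, so that piece is $\int_{\partial\mathcal{D}} a\,\partial_{\mathbf{n}}H\,dS_{\mathbf{x}}$. On $\partial B_\rho$, writing $H = -\frac{1}{2\pi}\log\rho + H_0$ with $\log\rho$ constant on the circle, one finds $\int_{\partial B_\rho} H\,\partial_r a\,dS_{\mathbf{x}} = -\frac{\log\rho}{2\pi}\int_{\partial B_\rho}\partial_r a\,dS_{\mathbf{x}} + \int_{\partial B_\rho} H_0\,\partial_r a\,dS_{\mathbf{x}}$, where the first term is zero by the vanishing flux of $a$ and the second is $\mathcal{O}(\rho^{-1})$; likewise $\int_{\partial B_\rho} a\,\partial_r H\,dS_{\mathbf{x}} = -\frac{1}{2\pi\rho}\int_{\partial B_\rho} a\,dS_{\mathbf{x}} + \mathcal{O}(\rho^{-1})$, which converges to $-a_\infty$ since the circular average of $a$ tends to $a_\infty$. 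Hence
\begin{equation}\nonumber
a_\infty = \int_{\partial\mathcal{D}} a\,\frac{\partial H}{\partial\mathbf{n}}\,dS_{\mathbf{x}}.
\end{equation}

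To finish, I would substitute the defining relation $\partial_{\mathbf{n}}H = H^\kappa + \kappa\,\partial_{\mathbf{n}}H^\kappa$ on $\partial\mathcal{D}$ into the last identity, apply the first Green identity to move the normal derivative off $H^\kappa$ and onto $a$, and invoke the Robin condition $b = a + \kappa\,\partial_{\mathbf{n}}a$:
\begin{equation}\nonumber
a_\infty = \int_{\partial\mathcal{D}} a\left(H^\kappa + \kappa\frac{\partial H^\kappa}{\partial\mathbf{n}}\right)dS_{\mathbf{x}} = \int_{\partial\mathcal{D}}\left(a + \kappa\frac{\partial a}{\partial\mathbf{n}}\right)H^\kappa\,dS_{\mathbf{x}} = \int_{\partial\mathcal{D}} b\,H^\kappa\,dS_{\mathbf{x}},
\end{equation}
which is the asserted formula for $\lim_{|\mathbf{x}|\to\infty}a(\mathbf{x})$.

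The step I expect to be the main obstacle is the passage to the limit on $\partial B_\rho$ in the second Green identity: one must explain precisely why the logarithmic growth of $H$ survives and produces exactly $-a_\infty$, while the a priori divergent term $\log\rho\int_{\partial B_\rho}\partial_r a\,dS_{\mathbf{x}}$ does not contribute. This rests entirely on the two properties isolated at the outset — the $\mathcal{O}(|\mathbf{x}|^{-1})$ and $\mathcal{O}(|\mathbf{x}|^{-2})$ decay and, decisively, the vanishing of the net flux of a bounded exterior harmonic function across large circles (which fails, for instance, for $\log|\mathbf{x}|$). The remaining points — existence and smoothness up to $\partial\mathcal{D}$ of $H$, $H_0$, $H^\kappa$, and the validity of Green's identity with merely $L^\infty$ data $b$ — are routine and are handled exactly as in the trace and regularity argument above.
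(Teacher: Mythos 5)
Your argument is correct and follows essentially the same route as the paper: Green's second identity for the pair $(a,H^\kappa)$ on a large annulus, with the vanishing of the far-field boundary term, to pass from $\int_{\partial\mathcal{D}} b\,H^\kappa\,dS_{\mathbf{x}}$ to $\int_{\partial\mathcal{D}} a\,\frac{\partial H}{\partial\mathbf{n}}\,dS_{\mathbf{x}}$ via the defining boundary condition of $H^\kappa$. The only substantive difference is that you prove the representation $\lim_{|\mathbf{x}|\to\infty}a(\mathbf{x})=\int_{\partial\mathcal{D}} a\,\frac{\partial H}{\partial\mathbf{n}}\,dS_{\mathbf{x}}$ yourself (via the Kelvin-transform decay rates, the vanishing net flux across large circles, and a second application of Green's identity to $(a,H)$ using $H=0$ on $\partial\mathcal{D}$), whereas the paper simply cites this fact from Maz'ya, Nazarov and Plamenevskij; your version is therefore self-contained, and your careful treatment of the $\log\rho$ term against the zero flux is precisely the point the citation conceals.
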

        \begin{remark}
        	We can view that function $H$ as the analogue to the Green's function in an exterior domain.
        \end{remark}
        \begin{proof}
            Let $\left<b,H^\kappa \right>_{\partial \mathcal{D}}:= \int_{\partial \mathcal{D}}b H^\kappa \ dS_{\mathbf{x}}$, by substitution we have that
            \begin{equation}\nonumber
                \left<b,H^\kappa \right>_{\partial \mathcal{D}} = \kappa\left<\frac{\partial a}{\partial \mathbf{n}},H^\kappa \right>_{\partial \mathcal{D}}+\left<a,H^\kappa \right>_{\partial \mathcal{D}}.
            \end{equation}
            As we cannot apply the divergence theorem for unbounded domains, we must consider the Divergence theorem in the domain $\lim\limits_{R\rightarrow \infty}B_R(\mathbf{0})\setminus \overline{\mathcal{D}}$, and then apply the fact that $a$ and $H^\kappa$ are both harmonic we have that
            \begin{equation}\nonumber
                \left<\frac{\partial a}{\partial \mathbf{n}},H^\kappa \right>_{\partial \mathcal{D}}=\lim_{R\rightarrow \infty} \int\limits_{\partial B_R} \left(a \frac{\partial H^\kappa}{\partial \mathbf{n}}-\frac{\partial a}{\partial \mathbf{n}}H^\kappa \right)\ dS_{\mathbf{x}} + \left<a,\frac{\partial H^\kappa}{\partial \mathbf{n}}\right>_{\partial \mathcal{D}}
            \end{equation}
            Thus by Green's $2^{\rm nd}$ identity and the harmonic nature of $a$ and $H^\kappa$ we have that
            \begin{equation}\nonumber
               \left<b,H^\kappa \right>_{\partial \mathcal{D}}= \int\limits_{\partial \mathcal{D}} \left( \kappa\frac{\partial H^\kappa}{\partial \mathbf{n}} + H^\kappa\right) a \ dS_{\mathbf{x}} = \int\limits_{\partial \mathcal{D}} \frac{\partial H}{\partial \mathbf{n}} a \ dS_{\mathbf{x}}.
            \end{equation}
            Results from  \cite{Nazarov} state that if $a$ is an exterior bounded harmonic function then it satisfies
            \begin{equation}\nonumber
                \lim\limits_{|\xi|\rightarrow \infty}a(\xi)\rightarrow \int_{\partial \mathcal{D}} \frac{\partial H}{\partial \mathbf{n}} a \ dS_{\mathbf{x}} = \left<b,H^\kappa \right>_{\partial \mathcal{D}}.
            \end{equation}
        \end{proof}
        \subsection{Motivation for the correct interior problem}
        We shall now apply the result we have derived to the circular inclusion. Let $D:= \overline{B}_1(\underline{0})$ denote the circular inclusion in the re-scaled domain given by
        $$\left\{\left.\frac{\mathbf{x}-\mathbf{c}}{\epsilon}\right| \ \forall \ \mathbf{x}\in \Omega_\epsilon \right\} $$
        note that for $\epsilon$ sufficiently small the re-scaled domain can be approximated to be $\mathbb{R}^2\setminus D$. The corresponding $H_0$ function on $\mathbb{R}^2\setminus D$ is identically zero, which implies that $H(\xi) := -\frac{1}{2\pi}\log |\xi|$ for $|\xi| \geqslant 1$; thus the function $H^\kappa$ is bounded, harmonic and satisfies $$H^\kappa - \kappa \frac{\partial H^\kappa}{\partial r}  = -\frac{\partial H}{\partial r} = \frac{1}{2\pi}\text{ for }r = 1.$$ Consequently, $H^\kappa(\xi) = \frac{1}{2\pi}$ for all $|\xi|\geqslant 1$, this is important because it implies that the limit of the bounded harmonic exterior function $a:\mathbb{R}^2\setminus D\rightarrow \mathbb{R}$ is given by
        \begin{equation}\nonumber
        	\lim\limits_{|\xi|\rightarrow \infty}a(\xi) \rightarrow \frac{1}{2\pi}\int\limits_{\partial D}\left(a-\kappa \frac{\partial a}{\partial r} \right)\ dS_{\xi},
        \end{equation}
        which is the average of the boundary data. This implies that the limit of $W_0$, from equation (\ref{inccorect exterior boundary condition}), is given by $$ \lim\limits_{\xi \rightarrow \infty} w_0(\xi)\rightarrow \frac{1}{2\pi}\int\limits_{-\pi}^{\pi} f_D(\vartheta)\ d\vartheta - V_0(\mathbf{c}).$$
        Thus for an arbitrary choice of $f_D$ the decay condition in equation (\ref{exterior decay condition}) is not satisfied. This implies that we require an additional degree of freedom in the exterior problem, we recall that the boundary data for the exterior problem was determined by the interior problem. Thus if we reformulate the interior problem we may resolve this issue.
          \subsection{Solution of the interior problem}
We seek a function $v_0$ whose domain is $\Omega$, which is harmonic in $\Omega_\epsilon$ for all $0 <\epsilon \ll 1$. In the previous interior problem, we assumed that the function $V_0$ was harmonic in the circular inclusion, which resulted in the over-determined exterior problem issue. However, if we allow a singularity at the centre of the inclusion then we introduce an additional degree of freedom to the problem, whilst also satisfying the same underlying equations. We seek a $v_0 \in C^{\infty}(\Omega\setminus \{\mathbf{c}\})$ which satisfies the following boundary value problem
\begin{equation}\nonumber
	-\Delta v_0 = c_0\delta(\mathbf{x}-\mathbf{c}), \text{ on }\Omega, \quad v_0 +\kappa \frac{\partial v_0}{\partial \mathbf{n}} = f_{\Omega}, \text{ on }\partial\Omega,
\end{equation}
where $c_0 \in \mathbb{R}$ is unknown constant to be determined in the exterior problem. As both the boundary conditions and the underlying equation are linear we may express the solution to this problem as a linear combination of a bounded harmonic function, which satisfies the boundary conditions, and a Green's function
\begin{equation}\nonumber
	v_0 := V_0 + c_0 G^\kappa(\cdot, \mathbf{c}), \text{ on }\Omega\setminus \{\mathbf{c}\}
\end{equation}
where $V_0 \in C^\infty(\Omega)$ satisfies
\begin{equation}\nonumber
	-\Delta V_0 = 0 \text{ on }\Omega, \quad V_0 +\kappa \frac{\partial V_0}{\partial \mathbf{n}} = f_{\Omega} \text{ on }\partial\Omega,
\end{equation}
and the function $G^\kappa$ is the Green's function of the Laplace operator with Robin boundary conditions given by
\begin{equation}\label{Greens function definition}
	-\Delta G^\kappa(\mathbf{x},\mathbf{y}) :=\delta(\mathbf{x}-\mathbf{y})\text{ on }\Omega, \quad G^\kappa +\kappa \frac{\partial G^\kappa}{\partial \mathbf{n}} = 0 \text{ on }\partial\Omega.
\end{equation}
Similar to before we may express it as the sum of the Newtonian potential in two-dimensions and a bounded function
$$G^\kappa(\mathbf{x},\mathbf{y}) = -\frac{1}{2\pi}\log |\mathbf{x}-\mathbf{y}| + \mathcal{G}^{\kappa, \mathbf{y}}(\mathbf{x}) $$
where the harmonic function $\mathcal{G}^{\kappa, \mathbf{y}}\in C^\infty(\Omega)$ satisfies
$$ \left(\mathcal{G}^{\kappa, \mathbf{y}} + \kappa \frac{\partial \mathcal{G}^{\kappa, \mathbf{y}}}{\partial \mathbf{n}}\right)(\mathbf{x}) = \frac{1}{2\pi}\log |\mathbf{x}-\mathbf{y}| + \frac{\kappa}{2\pi}\frac{(\mathbf{x}-\mathbf{y})\cdot \mathbf{n}(\mathbf{x})}{|\mathbf{x}-\mathbf{y}|^2}, \quad \mathbf{x}\in \partial \Omega. $$
This decomposition is important because we can utilise the real analytic nature of $\mathcal{G}^{\kappa, \mathbf{y}}$to deduce a leading order approximation to $G^\kappa$ and it's Robin boundary data on $\partial D_\epsilon$
\begin{equation}\nonumber
	G^{\kappa}(\mathbf{x},\mathbf{c}) = -\frac{1}{2\pi}\log |\mathbf{x}-\mathbf{c}| +\mathcal{G}^{\kappa, \mathbf{c}}(\mathbf{c}) + \nabla \mathcal{G}^{\kappa, \mathbf{c}}(\mathbf{c})\cdot (\mathbf{x}-\mathbf{c}) + \mathcal{O}(|\mathbf{x}-\mathbf{c}|^2),
\end{equation}
\begin{equation}\nonumber
	\begin{split}
		\left. G^{\kappa}(\mathbf{x}, \mathbf{c}) - \frac{\partial G^{\kappa}}{\partial r}(\mathbf{x}, \mathbf{c})\right|_{\partial D_\epsilon} = -\frac{1}{2\pi}\log \epsilon +\mathcal{G}^{\kappa, \mathbf{c}}(\mathbf{c})  + \frac{\kappa}{2\pi \epsilon} - \kappa \nabla \mathcal{G}^{\kappa, \mathbf{c}}(\mathbf{c})\cdot \left(\begin{array}{c}
		\cos \vartheta \\ \sin \vartheta
		\end{array} \right) + \mathcal{O}(\epsilon).
	\end{split}
\end{equation}
In the incorrect derivation we considered the discrepancy between the exact solution $u^\epsilon$ and our interior function, to motivate the boundary data of the exterior problem. The same is applied here, on the boundary of $\partial \Omega$ the discrepancy is given by 
$$ \left(I+ \kappa \frac{\partial }{\partial \mathbf{n}}\right)(u^\epsilon-v_0) = \epsilon g_\Omega.$$
Which provides motivation for an identical decay condition, however the discrepancy on the boundary $\partial D_\epsilon$ differs greatly
\begin{equation}\nonumber
	\begin{split}
		\left(I+ \kappa \frac{\partial }{\partial \mathbf{n}}\right)(u^\epsilon-v_0) = f_D(\vartheta) -V_0(\mathbf{c}) + \kappa \nabla V_0(\mathbf{c})\cdot \left(\begin{array}{c}
\cos \vartheta \\ \sin \vartheta
\end{array} \right) - c_0\left(\mathcal{G}^{\kappa, \mathbf{c}}(\mathbf{c})-\frac{1}{2\pi}\log \epsilon  \right)  \\ -c_0\kappa \left(\frac{1}{2\pi \epsilon}- \nabla \mathcal{G}^{\kappa, \mathbf{c}}(\mathbf{c})\cdot \left(\begin{array}{c}
		\cos \vartheta \\ \sin \vartheta
		\end{array} \right) \right) + \epsilon g_D + \mathcal{O}(\epsilon(1+\kappa)).
	\end{split}
\end{equation}
Truncating this expansion to the leading order yields the boundary data for the exterior problem.
\subsection{Solution of the exterior problem}
We seek a bounded harmonic $w_0 \in C^{\infty}(\mathbb{R}^2\setminus D)$ which satisfies the following conditions
\begin{equation}\label{correct exterior decay condition}
\lim\limits_{|\xi|\rightarrow \infty}w_0\rightarrow 0,
\end{equation}
\begin{equation}\label{correct exterior boundary condition}
	\begin{split}
	w_0- \kappa \frac{\partial w_0}{\partial r}= f_D(\vartheta) -V_0(\mathbf{c}) + \kappa \nabla V_0(\mathbf{c})\cdot \left(\begin{array}{c}
\cos \vartheta \\ \sin \vartheta
\end{array} \right) - c_0\mathcal{G}^{\kappa, \mathbf{c}}(\mathbf{c}) \\ +\frac{c_0}{2\pi}\log \epsilon   - \frac{c_0\kappa}{2\pi \epsilon}+c_0\kappa \nabla \mathcal{G}^{\kappa, \mathbf{c}}(\mathbf{c})\cdot \left(\begin{array}{c}
		\cos \vartheta \\ \sin \vartheta
		\end{array} \right) .
	\end{split}
\end{equation}
Combining the decay condition, from above, and the result from section (\ref{limit theorem}) we know that $w_0$ decays to zero if and only if 
$$\frac{1}{2\pi}\int\limits_{\partial D_\epsilon}\left(w_0- \kappa \frac{\partial w_0}{\partial r} \right)\ dS_{\xi} = 0. $$
Substituting in the boundary conditions, we deduce that 
$$ \frac{1}{2\pi}\int\limits_{-\pi}^\pi f_D \ d\vartheta - V_0(\mathbf{c}) - c_0 \left(\mathcal{G}^{\kappa, \mathbf{c}}(\mathbf{c}) + \frac{\kappa}{2\pi \epsilon} - \frac{1}{2\pi}\log \epsilon\right)=0.$$
In the previous section this was a problem as the boundary data was a given function, however with the introduction of the point singularity in the interior problem we introduced an unknown variable $c_0$. Thus we may choose $c_0$ such that the above condition is always fulfilled, and consequently that the decay condition is always fulfilled
\begin{equation}\label{constant c0 definition}
	c_0 := \frac{\frac{1}{2\pi}\int\limits_{-\pi}^\pi f_D \ d\vartheta - V_0(\mathbf{c})}{\mathcal{G}^{\kappa, \mathbf{c}}(\mathbf{c}) + \frac{\kappa}{2\pi \epsilon} - \frac{1}{2\pi}\log \epsilon}.
\end{equation}
Substituting our constant $c_0$ into our boundary conditions and simplifying we deduce that $w_0$ satisfies
\begin{equation}\label{w boundary condition}
            w_0- \kappa \frac{\partial w_0}{\partial r} = \left(f_{D}(\vartheta) - \frac{1}{2\pi}\int\limits_{-\pi}^{\pi}f_{D}d\vartheta\right) + \kappa \left(\nabla V_0(\mathbf{c}) + c_0 \nabla\mathcal{G}^{\kappa, \mathbf{c}}(\mathbf{c})\right)\cdot \left(\begin{array}{c} \cos \vartheta \\ \sin \vartheta \end{array} \right), \quad \forall \vartheta \in [-\pi,\pi).
        \end{equation}At this point we recall that we made an assumption about the Fourier decomposition of the function $f_D$, substituting this into the above equation we deduce a Fourier decomposition of the boundary data
        \begin{equation}\label{Exterior Boundary data simplified}
            w_0- \kappa \frac{\partial w_0}{\partial r} = \sum\limits_{n=1}^\infty \left( \mathbf{f}_n + \delta_{n,1}\kappa \left(\nabla V_0(\mathbf{c}) + c_0 \nabla \mathcal{G}^{\kappa, \mathbf{c}}(\mathbf{c})\right)\right)\cdot \left(\begin{array}{c} \cos n\vartheta \\ \sin n\vartheta \end{array} \right), \quad \forall \vartheta \in [-\pi,\pi).
        \end{equation}
where $\mathbf{f}^n$ are components of the Fourier series decomposition of $f_D$ given in equation (\ref{Fourier series boundary data}). As we are working with a harmonic function in polar co-ordinates, we may now use the particular analytical form to deduce that
\begin{equation}\label{w0 interior analytic}
	w_0 = \sum\limits_{n=1}^{\infty}\frac{1}{r^n}\frac{1}{1+\kappa n}\left(\mathbf{f}^n + \kappa \delta_{n,1} \left(\nabla V_0(\mathbf{c}) + c_0 \nabla \mathcal{G}^{\kappa, \mathbf{c}}(\mathbf{c})\right) \right)\cdot \left(\begin{array}{c} \cos n\vartheta \\ \sin n\vartheta \end{array} \right),
\end{equation}
\subsection{Leading order approximation}
We may now express the leading order approximation to $u^\epsilon$ as the sum of the interior function $v_0$ and the exterior function $w_0$, the explicit expression is given by
\begin{equation}\nonumber
	\begin{split}
		u^{\epsilon}(\mathbf{x}) \approx u^\epsilon_0(\mathbf{x}) := V_0(\mathbf{x}) + c_0 G^\kappa(\mathbf{x}, \mathbf{c})  +\sum\limits_{n=1}^{\infty}\frac{\epsilon^n}{|\mathbf{x}-\mathbf{c}|^n}\frac{1}{1+\kappa n}\mathbf{f}^n\cdot \left(\begin{array}{c} \cos n\vartheta(\mathbf{x}-\mathbf{c}) \\ \sin n\vartheta(\mathbf{x}-\mathbf{c}) \end{array} \right) \\ + \frac{\epsilon}{|\mathbf{x}-\mathbf{c}|}\frac{\kappa}{1+\kappa }\left(\nabla V_0(\mathbf{c}) + c_0 \nabla \mathcal{G}^{\kappa, \mathbf{c}}(\mathbf{c})\right) \cdot \left(\begin{array}{c} \cos \vartheta(\mathbf{x}-\mathbf{c}) \\ \sin \vartheta(\mathbf{x}-\mathbf{c}) \end{array} \right) ,
	\end{split}
\end{equation}
for $\mathbf{x}\in \Omega_\epsilon$.  We now require a proof that the approximation $u^\epsilon_0$ is close to the exact solution $u^\epsilon$ and obtain a rate for the uniform bound between the two functions.
\section{Proof of the uniform bound}
In this section we shall prove that the difference between the exact solution and asymptotic approximation is uniformly bounded
\begin{equation}\nonumber
	|u^\epsilon-u^\epsilon_0|(\mathbf{x}) = \mathcal{O}\left(\epsilon(1+\kappa)\left(1+\frac{\kappa+1}{\frac{\kappa}{\epsilon}-\log \epsilon}\right) \right), \quad \forall \mathbf{x}\in \overline{\Omega}_\epsilon.
\end{equation}
We know from the previously derived maximum principle, that the difference between the exact solution $u^\epsilon$ and our asymptotic approximation $u^\epsilon_0$ is uniformly bounded by the maximum difference of the Robin boundary data. Thus 
$$|u^\epsilon - u^\epsilon_0|(\mathbf{x}) \leqslant \sup\limits_{\mathbf{y}\in \partial \Omega_\epsilon}\left| \left(I + \kappa \frac{\partial }{\partial \mathbf{n}} \right)(u^\epsilon-u^\epsilon_0)(\mathbf{y})\right|, $$
thus our objective in this section will be to derive an upper bound on the above trace in terms of constants, $\epsilon$ and $\kappa$; which will allow us to determine at which points the approximation breaks down. However, we must derive a series of lemma's which will be used to prove this bound.
           \subsection{Local bound of a harmonic function\label{section: Local bound of a harmonic function}}
                Let $a\in C^\infty(\Omega)\cap C(\overline{\Omega})$ be harmonic, consider a ball of radius $R>0$ centred at $\mathbf{x}_0 \in \Omega$ which is such that $\overline{B}_R(\mathbf{x}_0) \subset \overline{\Omega}$. We obtain the following bound
                \begin{equation}\nonumber
                    \left|a(\mathbf{x})-a(\mathbf{x}_0) \right|\leqslant \frac{2|\mathbf{x}-\mathbf{x}_0|}{R-|\mathbf{x}-\mathbf{x}_0|}\left(a(\mathbf{x}_0)-\inf_{\mathbf{x}\in \overline{\Omega}}a(\mathbf{x}) \right), \quad \forall \mathbf{x}\in B_{R}(\mathbf{x}_0).
                \end{equation}
            \begin{proof}
                We define the function $h := a - \inf\limits_{\mathbf{y}\in \overline{\Omega}}a(\mathbf{y})\geqslant 0$  on $\overline{\Omega}$, this function is harmonic thus we may apply Harnack's inequality to obtain
                \begin{equation}\nonumber
                    \frac{R-|\mathbf{x}-\mathbf{x}_0|}{R+|\mathbf{x}-\mathbf{x}_0|}h(\mathbf{x}_0) \leqslant h(\mathbf{x}) \leqslant \frac{R+|\mathbf{x}-\mathbf{x}_0|}{R-|\mathbf{x}-\mathbf{x}_0|}h(\mathbf{x}_0), \quad \forall \mathbf{x}\in B_R(\mathbf{x}_0)
                \end{equation}
                \begin{equation}\nonumber
                    -\frac{2|\mathbf{x}-\mathbf{x}_0|}{R+|\mathbf{x}-\mathbf{x}_0|}h(\mathbf{x}_0) \leqslant h(\mathbf{x})-h(\mathbf{x}_0) \leqslant \frac{2|\mathbf{x}-\mathbf{x}_0|}{R-|\mathbf{x}-\mathbf{x}_0|}h(\mathbf{x}_0), \quad \forall \mathbf{x}\in B_R(\mathbf{x}_0)
                \end{equation}
                The definition of $h$ implies that$$h(\mathbf{x})-h(\mathbf{x}_0) =\left(a(\mathbf{x}) - \inf_{\mathbf{x}\in \overline{\Omega}}a(\mathbf{x})\right) -\left(a(\mathbf{x}_0) - \inf_{\mathbf{x}\in \overline{\Omega}}a(\mathbf{x})\right) =a(\mathbf{x}) - a(\mathbf{x}_0)$$ we have that
                \begin{equation}\nonumber
                    -\frac{2|\mathbf{x}-\mathbf{x}_0|}{R+|\mathbf{x}-\mathbf{x}_0|}\left(a(\mathbf{x}_0) - \inf_{\mathbf{x}\in \overline{\Omega}}a(\mathbf{x}) \right)\leqslant a(\mathbf{x})-a(\mathbf{x}_0) \leqslant \frac{2|\mathbf{x}-\mathbf{x}_0|}{R-|\mathbf{x}-\mathbf{x}_0|}\left(a(\mathbf{x}_0) - \inf_{\mathbf{x}\in \overline{\Omega}}a(\mathbf{x}) \right),
                \end{equation}
                for all $\mathbf{x}\in B_R(\mathbf{x}_0)$.
			\end{proof}
            \subsection{Local bound on the gradient of a harmonic function\label{section: Local bound on the radial derivative of a harmonic function}}
                As in the previous section we shall consider a harmonic $a\in C^\infty(\Omega)\cap C(\overline{\Omega})$, and a ball of radius $R>0$ centred at $\mathbf{x}_0 \in \Omega$ which is such that $\overline{B}_R(\mathbf{x}_0) \subset \overline{\Omega}$. We obtain the following bounds
                \begin{equation}\nonumber
                    \left|\nabla a \right|(\mathbf{x}_0)\leqslant \frac{2}{R}\|a\|_{L^\infty(\Omega)}.
                \end{equation}
                \begin{equation}\nonumber
                    \left|\nabla a(\mathbf{x})-\nabla a(\mathbf{x}_0) \right|\leqslant \frac{2\|a\|_{L^\infty(\Omega)}}{R} \frac{|\mathbf{x}-\mathbf{x}_0|(2R-|\mathbf{x}-\mathbf{x}_0|)}{(R-|\mathbf{x}-\mathbf{x}_0|)^2},
                \end{equation}
                for all $ |\mathbf{x}-\mathbf{x}_0|< R$.
            \begin{proof}
                We define the function $\alpha\in C^\infty([0,R]\times [-\pi,\pi))$ by
                \begin{equation}\nonumber
                	\alpha(r,\vartheta) := a\left(\mathbf{x}_0 + r \left(\begin{array}{c} \cos \vartheta \\ \sin \vartheta \end{array} \right) \right), \ \Rightarrow  \ \nabla a\left(\mathbf{x}_0 + r \left(\begin{array}{c} \cos \vartheta \\ \sin \vartheta \end{array} \right) \right) \cdot  \left(\begin{array}{c} \cos \vartheta \\ \sin \vartheta \end{array} \right) =  \frac{\partial \alpha}{\partial r}(r,\vartheta).
                \end{equation}
               We may express the harmonic function as the following analytic formula
               $$\alpha(r,\vartheta) = \frac{a^0}{2}+\sum\limits_{n=1}^\infty r^n \left(a^n_1 \cos n \vartheta + a^n_2 \sin n\vartheta \right),$$
               where the constants $a^n_1 :=\frac{1}{\pi R^n} \int\limits_{-\pi}^{\pi}\alpha (R,\vartheta)\cos n \vartheta\ d\vartheta$ and $a^n_2 :=\frac{1}{\pi R^n} \int\limits_{-\pi }^{\pi}\alpha (R,\vartheta)\sin n \vartheta\ d\vartheta$. We consider the norm of the derivative of $a$
               $$\left|\nabla a\left(\mathbf{x}_0 \right)\right| =  \left(\left|\frac{\partial \alpha}{\partial r}\right|+ \frac{1}{r}\left|\frac{\partial \alpha}{\partial \vartheta}\right|\right)_{r=0}= \sqrt{\left(a^1_1 \cos \vartheta + a^1_2 \sin\vartheta \right)^2+\left(a^1_1 \cos \vartheta + a^1_2 \sin \vartheta \right)^2}= |\mathbf{a}^1|.$$
                It is clear by the definition of the Fourier coefficients that $|\mathbf{a}^n|\leqslant \frac{2}{R^n}\|a\|_\infty$ thus we immediately obtain our first result. To derive our second result, we consider the difference of derivatives
                \begin{equation}\nonumber
                	\frac{\partial \alpha}{\partial r}(r,\vartheta) - \frac{\partial \alpha }{\partial r}(0,\vartheta) =\sum\limits_{n=2}^\infty n r^{n-1}\left(a^n_1 \cos n \vartheta + a^n_2 \sin n\vartheta \right),
                \end{equation}
                \begin{equation}\nonumber
                	\frac{\partial \alpha}{\partial \vartheta}(r,\vartheta) - \frac{\partial \alpha }{\partial \vartheta}(0,\vartheta) =\sum\limits_{n=2}^\infty n r^{n}\left(a^n_2 \cos n \vartheta - a^n_1 \sin n\vartheta \right),
                \end{equation}
Applying the triangle inequality and the gradient in polar co-ordinates we deduce that 
$$|\nabla a(\mathbf{x})- \nabla a(\mathbf{x}_0)|= \left|\frac{\partial \alpha}{\partial r}(r,\vartheta) - \frac{\partial \alpha }{\partial r}(0,\vartheta)\right|+\frac{1}{r}\left|\frac{\partial \alpha}{\partial \vartheta}(r,\vartheta) - \frac{\partial \alpha }{\partial \vartheta}(0,\vartheta)\right|\leqslant \sum\limits_{n=2}^\infty n r^{n-1}|\mathbf{a}^n|.$$
Using the previous inequality on Fourier coefficients and the derivative of a geometric sequence we deuce that
\begin{equation}\nonumber
                	|\nabla a(\mathbf{x})- \nabla a(\mathbf{x}_0)|\leqslant \frac{2\|a\|_\infty}{R}\sum\limits_{n=2}^\infty n \left(\frac{r}{R}\right)^{n-1} = \frac{2\|a\|_\infty}{R} \frac{r(2R-r)}{(R-r)^2}.
                \end{equation}
                               
			\end{proof}
            
            \subsection{A bound on the discrepancy on $\partial D_\epsilon$}
Recall that our maximum principle has bounded the difference between the exact solution $u^\epsilon$ and our leading order approximation $u^\epsilon_0$ by the maximum difference in the Robin trace. We define the Robin boundary operator $\mathcal{T}^\kappa := I+\kappa \frac{\partial }{\partial \mathbf{n}}$and consider the difference on the boundary $\partial D_\epsilon$
\begin{equation}\nonumber
	\mathcal{T}^\kappa(u^\epsilon-u^\epsilon_0) = (f_D + \epsilon g_D) - \mathcal{T}^\kappa V_0 - c_0 \mathcal{T}^\kappa G^\kappa(\cdot, \mathbf{c}) - \mathcal{T}^\kappa w_0.
\end{equation}
Recall the boundary conditions of the function $w_0$ from equation (\ref{Exterior Boundary data simplified})
\begin{equation}\nonumber
	\mathcal{T}^\kappa(u^\epsilon-u^\epsilon_0) = \frac{1}{2\pi}\int\limits_{-\pi}^{\pi}f_D\ d\vartheta + \epsilon g_D(\mathbf{x}) - \mathcal{T}^\kappa V_0(\mathbf{x}) - c_0 \mathcal{T}^\kappa G^\kappa(\mathbf{x}, \mathbf{c}) -\kappa \frac{\partial V_0}{\partial r}(\mathbf{c}) -\kappa c_0 \frac{\partial \mathcal{G}^{\kappa, \mathbf{c}}}{\partial r}(\mathbf{c}) .
\end{equation}
where the notation $\frac{\partial V_0}{\partial r}(\mathbf{c}) :=\nabla V_0(\mathbf{c})\left(\begin{array}{c} \cos\vartheta \\ \sin \vartheta \end{array} \right)$ and  $\frac{\partial \mathcal{G}^{\kappa, \mathbf{c}}}{\partial r}(\mathbf{c}) :=\nabla  \mathcal{G}^{\kappa, \mathbf{c}}(\mathbf{c})\left(\begin{array}{c} \cos\vartheta \\ \sin \vartheta \end{array} \right)$. To simplify the above expression we substitute the decomposition of the function $G^\kappa$, apply the fact on the boundary of $D_\epsilon$ the operator $\mathcal{T}^\kappa := I-\kappa \frac{\partial }{\partial r}$ to deduce that 
$$\mathcal{T}^\kappa G^\kappa(\mathbf{x}, \mathbf{c}) = \mathcal{G}^{\kappa, \mathbf{c}}(\mathbf{c}) -\frac{1}{2\pi}\log \epsilon + \frac{\kappa}{2\pi \epsilon} + \left(\mathcal{G}^{\kappa, \mathbf{c}}(\mathbf{x})-\mathcal{G}^{\kappa, \mathbf{c}}(\mathbf{c}) \right) - \kappa \frac{\partial \mathcal{G}^{\kappa, \mathbf{c}}}{\partial r}(\mathbf{x}),$$
on the boundary $\partial D_\epsilon$. Substituting the above expression into the discrepancy yields 
\begin{equation}\nonumber
	\begin{split}
		\mathcal{T}^\kappa(u^\epsilon-u^\epsilon_0) = 
        \frac{1}{2\pi}\int\limits_{-\pi}^{\pi}f_D\ d\vartheta - V_0(\mathbf{x})-c_0\left(\mathcal{G}^{\kappa, \mathbf{c}}(\mathbf{c}) -\frac{1}{2\pi}\log \epsilon + \frac{\kappa}{2\pi \epsilon}\right) + \kappa \left(\frac{\partial V_0}{\partial r}(\mathbf{x})-\frac{\partial V_0}{\partial r}(\mathbf{c}) \right)
       \\ -c_0 \left(\mathcal{G}^{\kappa, \mathbf{c}}(\mathbf{x})-\mathcal{G}^{\kappa, \mathbf{c}}(\mathbf{c}) \right) 
        +c_0 \kappa \left(\frac{\partial \mathcal{G}^{\kappa, \mathbf{c}}}{\partial r}(\mathbf{x}) - \frac{\partial \mathcal{G}^{\kappa, \mathbf{c}}}{\partial r}(\mathbf{c}) \right)+\epsilon g_D
	\end{split}
\end{equation}
To simplify the $\mathcal{G}^{\kappa, \mathbf{c}}(\mathbf{c}) -\frac{1}{2\pi}\log \epsilon + \frac{\kappa}{2\pi \epsilon}$ term we substitute the definition of the constant $c_0$ from equation (\ref{constant c0 definition}) to obtain
\begin{equation}\nonumber
	\begin{split}
		\mathcal{T}^\kappa(u^\epsilon-u^\epsilon_0) = 
        \left(V_0(\mathbf{c}) - V_0(\mathbf{x})\right) -c_0 \left(\mathcal{G}^{\kappa, \mathbf{c}}(\mathbf{x})-\mathcal{G}^{\kappa, \mathbf{c}}(\mathbf{c}) \right)   \\+ \kappa \left(\frac{\partial V_0}{\partial r}(\mathbf{x})-\frac{\partial V_0}{\partial r}(\mathbf{c}) \right)
        +c_0 \kappa \left(\frac{\partial \mathcal{G}^{\kappa, \mathbf{c}}}{\partial r}(\mathbf{x}) - \frac{\partial \mathcal{G}^{\kappa, \mathbf{c}}}{\partial r}(\mathbf{c}) \right) + \epsilon g_D
	\end{split}
\end{equation}
Thus the discrepancy of the approximation on the boundary $\partial D_\epsilon$ is given by the difference of harmonic functions and their evaluation at specific points. We may use the triangle inequality, the maximum principle from section (\ref{Bounded Robin Maximum Principle}) and the bounds derived in sections \ref{section: Local bound of a harmonic function} and \ref{section: Local bound on the radial derivative of a harmonic function}, to deduce that 
\begin{equation}\nonumber
	\begin{split}
		\left|\mathcal{T}^\kappa(u^\epsilon-u^\epsilon_0)\right| \leqslant 
        \frac{4\epsilon \|f_\Omega\|_\infty}{R_{\min}-\epsilon} + \epsilon\|g_D\|_\infty +|c_0| \frac{2\epsilon} {\pi(R_{\min}-\epsilon)}\left(\mathcal{G}^{\kappa}_\infty + \frac{\kappa}{R_{\min}} \right)     \\+ \frac{2\kappa}{R_{\min}}\frac{\epsilon(2R_{\min}-\epsilon)}{(R_{\min}-\epsilon)^2}\|f_\Omega\|_\infty +|c_0| \kappa \left(\mathcal{G}^{\kappa}_\infty  + \frac{\kappa}{R_{\min}} \right) \frac{2\kappa}{R_{\min}}\frac{\epsilon(2R_{\min}-\epsilon)}{2\pi (R_{\min}-\epsilon)^2}
	\end{split}
\end{equation}
where $\mathcal{G}^{\kappa}_\infty := \max\left\{|\log R_{\min}|, |\log R_{\max}| \right\}$. To remove the $R_{\min}-\epsilon$ terms we apply the geometrical assumption from equation (\ref{epsilon bound}) to deduce the uniform bound
\begin{equation}\nonumber
	|\mathcal{T}^\kappa(u^\epsilon-u^\epsilon_0)|(\mathbf{x}) = \mathcal{O}\left(\epsilon(1+\kappa)\left(1+\frac{\kappa+1}{\frac{\kappa}{\epsilon}-\log \epsilon}\right) \right), \quad \forall \mathbf{x}\in \partial D_\epsilon.
\end{equation}
\subsection{A bound on the discrepancy on $\partial \Omega$ }            
We now consider the discrepancy between the exact solution and the leading order approximation on the boundary of our interior domain, 
\begin{equation}\nonumber
	\mathcal{T}^\kappa (u^\epsilon-u^\epsilon_0) = (f_\Omega + \epsilon g_\Omega) - \mathcal{T}^\kappa V_0 - c_0 \mathcal{T}^\kappa G^\kappa(\cdot, \mathbf{c}) - \mathcal{T}^\kappa w_0.
\end{equation}
We recall the boundary conditions of the functions $V_0$, from equation (\ref{Interior boundary condition}) and $G^\kappa$ , from equation (\ref{Greens function definition}), we deduce that 
\begin{equation}\nonumber
	\mathcal{T}^\kappa (u^\epsilon-u^\epsilon_0) = \epsilon g_\Omega(\mathbf{x}) - w_0(\mathbf{x}) - \kappa\frac{\partial w_0}{\partial \mathbf{n}}(\mathbf{x}), \quad \forall \mathbf{x}\in \partial \Omega.
\end{equation}
Thus we seek a uniform bound for both $w_0$ and $\frac{\partial w_0}{\partial \mathbf{n}}$, this analysis will rely on the expansion from equation (\ref{w0 interior analytic}), 
\begin{equation}\nonumber
	\begin{split}
		w_0(\mathbf{x}) = \frac{\kappa}{1+\kappa}\frac{\epsilon}{|\mathbf{x}-\mathbf{c}|} \left(\nabla V_0(\mathbf{c}) + c_0 \nabla \mathcal{G}^{\kappa, \mathbf{c}}(\mathbf{c})\right) \cdot \left(\begin{array}{c} \cos \vartheta(\mathbf{x}-\mathbf{c}) \\ \sin \vartheta(\mathbf{x}-\mathbf{c}) \end{array} \right)\\ + \sum\limits_{n=1}^\infty \frac{\epsilon^n}{|\mathbf{x}-\mathbf{c}|^n}\frac{1}{1+\kappa n}\mathbf{f}^n \cdot \left(\begin{array}{c} \cos n\vartheta(\mathbf{x}-\mathbf{c}) \\ \sin n\vartheta(\mathbf{x}-\mathbf{c}) \end{array} \right).
	\end{split}
\end{equation}
Using the triangle inequality, the bounds from equation (\ref{section: Local bound on the radial derivative of a harmonic function}) and the geometric inequality from equation (\ref{Rmin and Rmax definition}) we deduce that for $\mathbf{x}\in \partial \Omega$
$$|w_0|(\mathbf{x}) \leqslant \frac{\kappa}{1+\kappa}\frac{\epsilon}{R_{\min}} \left(\frac{2}{R_{\min}}\|f_\Omega\|_{\infty} + \frac{|c_0|}{\pi R_{\min}}\left( \mathcal{G}^{\kappa}_\infty+ \frac{\kappa}{R_{\min}} \right)\right) + \sum\limits_{n=1}^\infty \left(\frac{\epsilon}{R_{\min}}\right)^n\frac{|\mathbf{f}^n|}{1+\kappa n}. $$
To bound the summation term, we use the face that $1+\kappa \leqslant 1+\kappa n$, the bound on the Fourier terms $|\mathbf{f}^n|\leqslant 2\|f_D\|_\infty$ and the sum of a geometric series to deduce that
$$ \sum\limits_{n=1}^\infty \left(\frac{\epsilon}{R_{\min}}\right)^n\frac{|\mathbf{f}^n|}{1+\kappa n} \leqslant \frac{2\|f_D\|_\infty}{1+\kappa}\frac{\epsilon}{R_{\min}-\epsilon}. $$
We now consider a bound on a normal derivative by applying the triangle inequality and the geometric assumptions from equation (\ref{Rmin and Rmax definition})
$$|\nabla w_0| \leqslant \frac{\kappa}{1+\kappa}\frac{\epsilon}{R_{\min}^2} \left(|\nabla V_0|(\mathbf{c}) + |c_0| |\nabla \mathcal{G}^{\kappa, \mathbf{c}}|(\mathbf{c})\right) + \sum\limits_{n=1}^\infty \frac{\epsilon^n}{R_{\min}^{n+1}}\frac{n|\mathbf{f}^n|}{1+\kappa n}.$$
We have that $\frac{1}{1+\kappa}\leqslant \frac{n}{1+\kappa n}\leqslant \frac{1}{\kappa}$ for $n \in \mathbb{N}$ and the bound on the harmonic gradient from section \ref{section: Local bound on the radial derivative of a harmonic function} to deduce that
$$|\nabla w_0|\leqslant \frac{\kappa}{1+\kappa} \frac{\epsilon}{R_{\min}^3} \left(2\|f_\Omega\|_{\infty} + \frac{|c_0|}{\pi} \left(\mathcal{G}^\kappa_\infty + \frac{\kappa}{R_{\min}} \right) \right) + \frac{2}{\kappa} \|f_D\|_{\infty} \frac{\epsilon}{R_{\min}-\epsilon}.$$
Thus we have the uniform bound
\begin{equation}\nonumber
	\begin{split}
		|\mathcal{T}^\kappa (u^\epsilon-u^\epsilon_0)|(\mathbf{x})\leqslant  \frac{\kappa^2\epsilon}{1+\kappa}\frac{2\|f_{\Omega}\|_{\infty} }{R_{\min}^3} + \frac{1}{\pi R_{\min}^3}  \frac{\left(\mathcal{G}^\kappa_\infty + \frac{\kappa}{R_{\min}} \right)}{1+\kappa}|c_0|\epsilon\kappa^2 +\left(\frac{ 2\|f_D\|_{\infty}}{R_{\min}-\epsilon} + \|g_{\Omega}\|_\infty\right)\epsilon \\ + \frac{1}{\pi R_{\min}}  \frac{\left(\mathcal{G}^\kappa_{\infty} + \frac{\kappa}{R_{\min}} \right)}{1+\kappa} |c_0|\epsilon\kappa + \frac{\kappa \epsilon}{1+\kappa} \frac{2\|f_\Omega\|_{\infty}}{R_{\min}^2} +\frac{2\|f_D\|_{\infty}}{R_{\min}-\epsilon} \frac{\epsilon}{1+\kappa},
	\end{split}
\end{equation}
for all $\mathbf{x}\in \partial \Omega$. Finally we use the geometric assumptions from equation (\ref{epsilon bound}), to remove the $R_{\min}-\epsilon$ terms, in addition to the inequality $\frac{\kappa^2}{1+\kappa}\leqslant \kappa$ for all $\kappa \geqslant 0$ to deduce that
\begin{equation}\nonumber
	|\mathcal{T}^\kappa(u^\epsilon-u^\epsilon_0)|(\mathbf{x}) = \mathcal{O}\left(\epsilon(1+\kappa)\left(1+\frac{\kappa+1}{\frac{\kappa}{\epsilon}-\log \epsilon}\right) \right), \quad \forall \mathbf{x}\in \partial \Omega.
\end{equation}
\section{Conclusion}
We have derived a compound asymptotic approximation to a harmonic function in a singular domain with Robin boundary conditions. We have proven a uniform bound between our exact solution and our approximation which is of order $$ \epsilon(1+\kappa)\left(1+\frac{\kappa+1}{\frac{\kappa}{\epsilon}-\log \epsilon}\right),$$
where $\kappa >0$ is the extrapolation length \cite{Extrapolation_length}. To prove this error we derived a maximum principle for Robin boundary conditions and . The approximation is written as the sum of three terms: an interior solution which satisfies the boundary conditions on $\partial \Omega$, a Green's function centred on the inclusion and an exterior solution which satisfies the boundary conditions on $\partial D_\epsilon$ whilst compensating for the interior solution and the Green's function. As we have considered a circular inclusion, the exterior solution has an explicit analytical expansion, in terms of the Fourier coefficients of the boundary data. 

This is known as a compound asymptotic expansion as  we may consider the next order approximation $u^\epsilon_1$ by considering the function $u^\epsilon_1 \approx \frac{1}{\epsilon}(u^\epsilon-u^\epsilon_0)$. The function $\frac{1}{\epsilon}(u^\epsilon-u^\epsilon_0)$ satisfies Laplace's equation in $\Omega_\epsilon$ and Robin boundary conditions, to obtain $u_1^\epsilon$ we consider the leading order term of the boundary data of $\frac{1}{\epsilon}(u^\epsilon-u^\epsilon_0)$, and then we apply the same method we used to derive $u^\epsilon_0$. Thus we have an algorithm to obtain higher order terms as we may approximate $u^\epsilon_{N+1}:= \frac{1}{\epsilon^N}\left(u^\epsilon-\sum\limits_{n=0}^N\epsilon^n u^\epsilon_n\right)$ using the same method.

One of the many applications of such an approximation would be in numerical analysis, the finite element method approximates the surface of a domain by a series of triangular tiles; thus for a small inclusion the size of the triangular tiles would be small and thus the number of triangles would be correspondingly high, which drives up computational costs. However, the leading order term only requires numerically approximating $V_0$ and $G^\kappa$ both of which are functions on the bulk domain $\Omega$, which would allow the use of a coarse mesh. 

Additionally, whilst a change in the size of the inclusion would require a new triangular mesh to be generated for each distinct $\epsilon$, the asymptotic formula we have derived requires no such re-computation as the exterior solution has an explicit dependence on $\epsilon$. Thus, for a fixed inclusion centre $\mathbf{c}\in \Omega$, if an numerical analyst wished to compare inclusions of multiple sizes then using the leading order approximation would be ideal.

However, this approximation is useful if one wishes to analyse different inclusion placements. The optimal location of the inclusion would correspond to the minimum energy, thus in future work we may consider the derivative of the energy with respect to the inclusion centre $$\frac{\partial }{\partial \mathbf{c}}\left(E_\kappa \left[\left(\begin{array}{c} \cos u^\epsilon_0 \\ \sin u^\epsilon_0 \end{array} \right)\right]\right) = \underline{0}.$$ 
to approximate the minimising location.

The work of Maz'ya, Nazarov and Plamenevskij  \cite{Nazarov}, they considered an asymptotic approximation in a singular domain with either Dirichlet or Neumann boundary conditions. They did not consider Robin boundary conditions, however the scope of their work was far wider, as they investigated:
\begin{itemize}
\item A more general class of inclusions, rather than just the circular that we have considered in this piece.
\item They considered systems of dimensions higher than two.
\item The Helmholtz equation, in addition to Laplace's equation.
\end{itemize}
The analysis of a more general inclusion shape is trivial, the conformal map of a harmonic function is harmonic\cite{CourantR2012DPCM}, thus the above asymptotic approximation is valid for any inclusion which can be mapped (in such a manner) to a circle. The analysis in higher dimensions is not a topic we have considered, however in Maz'ya, Nazarov and Plamenevskij's work the compound asymptotic expansion for the three dimensional system was far less complex than the two dimensional one. This is because in three dimensions the decay condition is automatically satisfied, thus rendering the Green's function term obsolete, we suspect that the analysis for Robin boundary conditions would be similar.

Finally, we note the point at which the approximation breaks down. As $\epsilon$ increases so to does the uniform bound, this is expected as the approximation is made under the assumption of a small inclusion. However, the uniform bound increases as the extrapolation length $\kappa \rightarrow \infty$. In the context of our boundary value problem this corresponds a change in the boundary conditions, from inhomogenous Robin to homogenous Neumann. 
\section{Acknowledgements}
We are grateful for the financial support of the University of Bath's department of Mathematical Sciences. Additionally, we are thankful for the support and guidance of Apala Majumdar whose guidance in the aspects of liquid crystals is invaluable. 
\printbibliography
\end{document}